\documentclass[12pt]{amsart}
\usepackage{amsmath}
\usepackage{amssymb}
\usepackage{amscd}
\usepackage{graphicx}
\usepackage{calrsfs}
\usepackage{enumitem}

\newtheorem{thm}{\sc theorem}

\newtheorem{lem}[thm]{\sc lemma}
\newtheorem{cor}[thm]{\sc corollary}

\newtheorem{trv}[thm]{\sc remark}

\theoremstyle{definition}

\newtheorem{dfn}[thm]{\sc definition}

\theoremstyle{remark}

\newtheorem{exm}[thm]{\sc example}

\newtheorem{qst}[thm]{\sc question}




\def\Z{{\bf Z}}

\def\R{{\bf R}}

\def\S{{\bf S}}

\def\D{{\bf D}}

\begin{document}

\title{Matching Cells}

\maketitle

\begin{center}
Ga\"el
Meigniez

\end{center}

\begin{abstract}
 A (complete) matching of the cells of a triangulated manifold can be thought as
a combinatorial or discrete version of a nonsingular vector field.
 This note gives several
methods for constructing such
matchings.

\end{abstract}
\subjclass{M.S.C. 2010: 05C70, 05E45, 37C10, 37F20, 57Q15}

\date{\today}
\section{Introduction} On a
polyhedral complex,
a ``partial matching'' is a family of disjoint pairs of cells such that in each pair,
 one of the two cells
is a hyperface (a face of codimension $1$)
 of the other. Following Forman
 \cite{forman_98}\cite{forman_98_2}\cite{forman_05}\cite{forman_07},
  such objects are regarded as a combinatorial
equivalent to vector fields.
In the literature, most attention has been given to ``discrete Morse theory'',
which concerns partial matchings whith strongly constrained dynamics,
and their relations to the homology of the ambiant complex.
The present note is about \emph{total} matchings involving all the cells,
or whose unmatched cells constitute a prescribed subcomplex; regardless
of their dynamics; we are interested on the \emph{existence} of such objects.
We provide some construction
methods, mainly on triangulated manifolds,
 either allowing oneself to subdivide the triangulation, or not.
The author feels that the methods are more important than the existence results themselves.
A first approach is combinatorial and linear-algebraic,
 making Hall's ``marriage theorem'' play with
cellular homology; two other ones are geometric: a matching is deduced
from an ambient
nonsingular vector field transverse to the cells, or
from a round handle decomposition of the manifold.

 Here are two results.
All manifolds and triangulations are understood smo\-oth ($C^\infty$).

\begin{thm}[Rational homology sphere] 
\label{sphere_thm} Let $M$ be a  rational homology sphere
 of odd dimension.
 
 Then, every triangulation, and more generally
every polyhedral cellulation of $M$ is (totally) matchable.
\end{thm}

\begin{thm}[Matchable subdivision]\label{main_thm}
 Let  $M$
be a compact connected manifold with smooth boundary,
 and let ${\partial_0M}$ be a union of connected components
of $\partial M$ such that $\chi(M,{\partial_0M})=0$. 

Then,
every triangulation of $M$ admits a subdivision matchable rel. $
{\partial_0M}$.

\end{thm}

See below for the meaning of relative matchings. Either of the two sets $
{\partial_0M}$
and $\partial M\setminus{\partial_0M}$ may be empty, or both. In particular:

\begin{cor}\label{closed_cor}
Every closed connected manifold whose Euler characteristic vanishes
admits a (totally) matchable triangulation.
\end{cor}

For manifolds of dimension $3$, this is due to
 E. Gallais \cite{gallais_10}.

\begin{qst}  \emph{ Let $n$ be an odd integer. 
 Is every triangulation of every  closed $n$-manifold (totally)
 matchable?}
\end{qst}
 The question is open even  for $n=3$.
\section{Matching cells, and obstructions to do so}

The rest of this note progressively
 investigates some methods to construct matchings, and also some obstructions;
starting with immediate, elementary remarks; and finishing with the proofs of Theorems 1 and 2.

One denotes by
$\vert A\vert$ the cardinality of the set $A$, by $\D^n\subset\R^n$ the compact unit disk,
 and by $\S^{n-1}:=\partial\D^n$
the $(n-1)$-sphere.

 In a first phase, manifolds are not mandatory, nor simplices.
 Consider generally a polyhedral cellular complex $X$ (the cells are convex polyhedra,
finiteness is understood everywhere)
 and
a subcomplex $Y\subset X$.
 Call two cells {\it incident} to each other if one is a hyperface of the other.
  Write $\Sigma(X,Y)$ (resp.  $\Sigma^n(X,Y)$) (resp.  $\Sigma_0(X,Y)$)
(resp.  $\Sigma_1(X,Y)$) for the set of the cells
(resp.  the $n$-dimensional
 cells) (resp.  the even-dimensional
 cells)  (resp.  the odd-dimensional
 cells) of $X$ 
 not lying in $Y$.

\begin{dfn}
 A \emph{matching} on $X$ \emph{relative to $Y$,}
 or a matching \emph{on the pair $(X,Y)$,}
 is a partition of $\Sigma(X,Y)$
 into incident pairs. \end{dfn}

For $Y=\emptyset$, we write $\Sigma(X)$ (resp. $\Sigma^n(X)$) (resp. $\Sigma_0(X)$)
(resp. $\Sigma_1(X)$) instead of $\Sigma(X,\emptyset)$
(resp. $\Sigma^n(X,\emptyset)$) (resp. $\Sigma_0(X,\emptyset)$)
(resp. $\Sigma_1(X,\emptyset)$); and
we speak of ``a matching on $X$''.

The cases of the complexes of dimension $1$ and of the triangulations of
surfaces
 will easily follow from a few general remarks.
\begin{trv}[Euler characteristic] If $(X,Y)$ is matchable, then
the relative Euler characteristic $\chi(X,Y)=\chi(X)-\chi(Y)$ vanishes.
\end{trv}

Indeed, $\chi(X,Y)=\vert\Sigma_0(X,Y)\vert-\vert\Sigma_1(X,Y)\vert$.

\begin{trv}[Collapse]\label{collapse_trv}  Every collapse of a polyhedral complex $X$ onto 
a subcomplex $Y$ gives a matching on $X$ rel. $Y$.
\end{trv}

 Indeed, a collapse is nothing but a filtration of $X$ by subcomplexes
$(X_n)$, where
 $0\le n\le N$, such that $X_0=Y$ and $X_N=X$; and that $\Sigma(X_n,X_{n-1})$ consists,
for each $1\le n\le N$,
of exactly two incident cells. 

(More precisely, an \emph{orbit}
in a matching is defined as a finite sequence $$\sigma_0,
\sigma_1, \dots\in\Sigma(X,Y)$$
 such that for every odd $k$, the cells
 $\sigma_{k-1}$ and $\sigma_{k+1}$ are two distinct hyperfaces of $\sigma_{k}$,
 and
  $\sigma_{k-1}$ is the mate of  $\sigma_{k}$.
A collapse of $X$ onto $Y$ amounts to a matching
of $X$ rel. $Y$
{without \emph{cyclic} orbit.})

\begin{trv}[Top-dimensional cycle]\label{cycle_trv}

  Every cellulation of the circle admits exactly two matchings.

More generally,
 let $X$ be a polyhedral cellulation of a manifold; let $\ell$
be a simple loop in the $1$-skeleton of the dual cellulation; let $Y\subset X$
be the union of the cells of $X$ disjoint from $\ell$. Then, $X$
admits exactly two matchings rel. $Y$.
\end{trv}

This is obvious.

\begin{exm}[Graphs] \emph{Every connected graph whose Euler
characteristic vanishes is matchable.}
\end{exm}

Indeed, such a graph collapses onto a
 circle.

\begin{exm}[Surfaces] \emph{Let $M$ be a compact, connected $2$-manifold
such that $\chi(M)=0$. 
 Then,  every polyhedral cellulation
 $X$ of $M$ is matchable absolutely, and
relatively to $\partial M$.}
\end{exm}
\begin{proof}
First case: $M$
is  the annulus or the M\"obius strip. Then,
 the $1$-skeleton of the cellulation dual to $X$
contains an essential simple loop $\ell$ such that
 $M$ cut along $\ell$ is an annulus or two annuli. So, the union $Y\subset X$
 of the cells of $X$ disjoint from $\ell$ collapses onto $\partial M$.
 The pair $(X,Y)$
is matchable (Remark \ref{cycle_trv}), the pair $(Y,\partial M)$ is matchable (Remark \ref{collapse_trv}), and  $X\vert\partial M$
is matchable (Remark \ref{cycle_trv}).

Second case: $M$ is
the $2$-torus or the Klein bottle. Then, the $1$-skeleton of the cellulation dual to $X$
contains an essential simple loop $\ell$ such that $M$ cut along $\ell$ is an annulus.
 Consider the union $Y\subset X$
 of the cells of $X$ disjoint from $\ell$. The pair $(X,Y)$
is matchable (Remark \ref{cycle_trv}) and the annulus $Y$
is matchable (first case).
\end{proof}

Next, recall
Hall's so-called ``marriage theorem''. Let $\Sigma:=\Sigma_0\sqcup \Sigma_1$ be
 a finite, $\Z/2\Z$-graded set and let
 $I\subset\Sigma\times\Sigma$ be a symmetric relation in $\Sigma$, of degree $1$.
 For every subset $A\subset \Sigma$,
 consider the subset $I(A)\subset \Sigma$ of the elements $I$-related
 to at least one element of $A$.
 A \emph{matching} on $\Sigma$ with respect to
  $I$ is a partition of $\Sigma$ into $I$-related pairs.

\begin{thm}[Hall \cite{hall_35}]\label{wedding_pro}
 The following properties are equivalent:
\begin{enumerate}
\item The relation $I$ is matchable;
\item One has $\vert A\vert\le\vert I(A)\vert$ for every $A\subset \Sigma$~;
\item $\vert\Sigma_1\vert=\vert\Sigma_0\vert$ and one has $\vert A\vert\le\vert I(A)\vert$ for every $A\subset\Sigma_0$.
\end{enumerate}
\end{thm}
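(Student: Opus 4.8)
The plan is to establish the cycle of implications $(1)\Rightarrow(2)\Rightarrow(3)\Rightarrow(1)$, with the understanding that $(1)\Rightarrow(2)$ and $(2)\Rightarrow(3)$ are essentially formal, so that the whole weight of the theorem sits in $(3)\Rightarrow(1)$, which is Hall's statement proper. Throughout I would use that $I$, being symmetric of degree $1$, relates only elements of $\Sigma_0$ to elements of $\Sigma_1$; in particular $I(A_0)\subseteq\Sigma_1$ whenever $A_0\subseteq\Sigma_0$, and symmetrically, and $I$ distributes over unions.

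For $(1)\Rightarrow(2)$: a matching assigns to each $\sigma\in A$ a distinct mate $m(\sigma)\in I(A)$, so $m$ injects $A$ into $I(A)$ and $\vert A\vert\le\vert I(A)\vert$. For $(2)\Rightarrow(3)$: applying $(2)$ to $A=\Sigma_0$ gives $\vert\Sigma_0\vert\le\vert I(\Sigma_0)\vert\le\vert\Sigma_1\vert$, and symmetrically $\vert\Sigma_1\vert\le\vert\Sigma_0\vert$, whence equality; the inequality $\vert A_0\vert\le\vert I(A_0)\vert$ for $A_0\subseteq\Sigma_0$ is a special case of $(2)$.

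For the substantial implication $(3)\Rightarrow(1)$ I would argue by induction on $n:=\vert\Sigma_0\vert=\vert\Sigma_1\vert$, the case $n\le 1$ being immediate (a single vertex $x$ of $\Sigma_0$ has a neighbour since $\vert I(\{x\})\vert\ge 1$, and that neighbour is the unique element of $\Sigma_1$). For the inductive step I would split on a dichotomy. Either (\emph{slack case}) every nonempty proper subset $A_0\subsetneq\Sigma_0$ satisfies the strict inequality $\vert A_0\vert<\vert I(A_0)\vert$: then pick any $x\in\Sigma_0$, mate it with any $y\in I(\{x\})$, and delete both; for every $A_0\subseteq\Sigma_0\setminus\{x\}$ one has $\vert I(A_0)\setminus\{y\}\vert\ge\vert I(A_0)\vert-1\ge\vert A_0\vert$, so Hall's condition survives and induction finishes the remaining pair of sets of size $n-1$. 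Or (\emph{tight case}) some nonempty proper $B_0\subsetneq\Sigma_0$ is \emph{critical}, i.e. $\vert B_0\vert=\vert I(B_0)\vert$: then the pair $(B_0,I(B_0))$ inherits Hall's condition and has equal sizes, so induction matches it perfectly; on the complementary pair $(\Sigma_0\setminus B_0,\Sigma_1\setminus I(B_0))$ one checks Hall's condition from $\vert I(A_0)\cup I(B_0)\vert=\vert I(A_0\cup B_0)\vert\ge\vert A_0\vert+\vert B_0\vert$, which yields $\vert I(A_0)\setminus I(B_0)\vert\ge\vert A_0\vert$, and induction matches this part too; concatenating the two matchings gives one on $\Sigma$.

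The main obstacle is precisely the tight case: one must verify that both the critical block and its complement separately inherit Hall's condition, the complement computation being the place where the degree-$1$ symmetry and the disjointness of $A_0$ and $B_0$ are used. An alternative to the whole induction would be an augmenting-path argument — start from any maximal matching and, if it saturates neither side, enlarge it along an alternating path, the obstruction to enlargement producing a violating $A_0$ — but the inductive dichotomy above seems the cleanest for a self-contained note.
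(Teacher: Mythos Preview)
Your argument is correct and is the classical inductive proof of Hall's theorem via the slack/tight (or ``critical'') dichotomy; the verifications in both branches are handled properly, including the only delicate point, namely that the complement of a critical block again satisfies the marriage condition.

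However, there is nothing to compare against: the paper does \emph{not} prove this theorem. It is stated there as a classical result, attributed to Hall with a citation to the original 1935 paper, and then simply used (notably in Lemma~\ref{acyclic_lem}). So your proposal is not a reconstruction of the paper's proof but rather a self-contained supplement the paper chose to omit. If you intend to include it, it is perfectly serviceable; the alternative augmenting-path argument you mention would tie in more directly with the paper's subsequent remark about the Ford--Fulkerson algorithm, but either choice is fine for a result the paper treats as background.
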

Also recall that the Ford-Fulkerson 
 algorithm \cite{ford_fulkerson_56}\cite{edmonds_karp_72} computes 
a matching,
 if any, in time $O(\vert\Sigma\vert^2\vert I\vert)$,
 thus giving some (moderate) effectiveness to our existence
results. 

\medbreak
Coming back to polyhedral complexes,
 some examples of unmatchable complexes
 will follow from the \emph{trivial} sense of Hall's criterion.

\begin{exm}
 \emph{A connected simplicial $2$-complex whose Euler characteristic vanishes,
 unmatchable as well as every subdivision.}

Let
$$X:=S^2\vee S^1\vee S^1$$ be the bouquet, at some common vertex $v$,
 of a triangulated $2$-sphere $S^2$ with
two triangulated circles.
 Then, $\chi(X)=0$, but $X$ does not admit
any matching. Indeed, for $A:=\Sigma_0(S^2,v)$,
 one has $I(A)=\Sigma_1(S^2)$, hence $\vert I(A)\vert=\vert A\vert-1$. The same 
holds for any subdivision of $X$.
\end{exm}
\begin{exm}\label{dim_4_exm}
\emph{Some unmatchable triangulated closed connected orientable
 manifolds, whose Euler characteristic vanishes.}

 Let $n=2k$ be even and at least $4$. Start with a closed orientable $n$-manifold $M$ whose Euler
 characteristic $\chi(M)$ is even, divided into
 two parts $M_1$, $M_2$ by a smooth hypersurface $M_0$. Fix a triangulation $X_0$
 of $M_0$.
 
 Recall that by Poincar\'e duality, the Euler characteristic of every closed odd-dimensional
 manifold vanishes. Moreover,
 $$\chi(M)+\chi(M_0)=\chi(M_1)+\chi(M_2)$$
 So, $\chi(M_1)$
 and $\chi(M_2)$ share the same parity.
  Also recall that, $n$ being even, the Euler characteristic of a connected sum of two
 $n$-manifolds $V$, $V'$ is
 $$\chi(V\sharp V')=\chi(V)+\chi(V')-2$$
  Since $\chi((\S^1)^n)=0$
 and $\chi((\S^2)^k)=2^k$, after modifying $M$ by some appropriate number of connected sums
 with $(\S^1)^n$ and/or with $(\S^2)^k$ performed on both sides of
  $M_0$, one can give arbitrary values to $\chi(M_1)$ and $\chi(M_2)$ in the same
  parity class as before, without changing
  $M_0$. In particular, one can arrange that 
  \begin{equation}\label{split_eqn}
\chi(M_2)=-\chi(M_1)>\vert\Sigma_0(X_0)\vert
\end{equation}
Finally, following Armstrong
\cite{armstrong_67}, extend $X_0$ to some triangulation $X_1$ of $M_1$
 and to some triangulation $X_2$ of $M_2$, thus obtaining a global triangulation $X$ of $M$.
 
 Clearly, $\chi(M)=0$. 
We claim that $X$ does not admit
any matching.

 Indeed, for $A:=\Sigma_0(X_2,X_0)$ one has obviously
 $$\vert A\vert=\vert\Sigma_0(X_2)\vert-\vert\Sigma_0(X_0)\vert$$
$$I(A)=\Sigma_1(X_2)$$
Now, recall that
$$\chi(M_2)=\vert \Sigma_0(X_2)\vert-\vert \Sigma_1(X_2)\vert$$
Together with the above inequation (\ref{split_eqn}),
 it follows that $\vert I(A)\vert<\vert A\vert$:
the triangulation $X$ is unmatchable.

Note that, after Theorem \ref{main_thm},
$X$ admits a matchable subdivision.

\end{exm}

\begin{lem}[Acyclic pair]\label{acyclic_lem}

 If $H_*(X,Y)=0$, then the pair $(X,Y)$ is matchable.
\end{lem}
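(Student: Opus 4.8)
The plan is to use Hall's criterion, in the form of property (2) in Theorem~\ref{wedding_pro}: to show that the pair $(X,Y)$ is matchable it suffices to verify $\vert A\vert\le\vert I(A)\vert$ for every $A\subset\Sigma(X,Y)$, where $I$ is the incidence relation. The hypothesis $H_*(X,Y)=0$ should enter through the cellular chain complex of the pair, whose chain groups $C_d(X,Y)$ are free abelian on the $d$-cells of $\Sigma(X,Y)$. Since the relative homology vanishes, this chain complex is acyclic, hence split: the boundary maps $\partial_d\colon C_d(X,Y)\to C_{d-1}(X,Y)$ satisfy $\im\partial_{d+1}=\ker\partial_d$ and there is no homology in any degree. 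In particular $\chi(X,Y)=0$, so $\vert\Sigma_0(X,Y)\vert=\vert\Sigma_1(X,Y)\vert$, and by property~(3) of Hall it is enough to verify the defect inequality for subsets $A_0\subset\Sigma_0(X,Y)$ of the even-dimensional cells.

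First I would set up the bipartite structure. Grade $\Sigma(X,Y)$ by parity of dimension into $\Sigma_0$ (even) and $\Sigma_1$ (odd); the incidence relation $I$ has degree $1$ and so pairs even cells only with odd ones, exactly the $\Z/2\Z$-graded setting of Theorem~\ref{wedding_pro}. The central idea is that incidence of cells is recorded by the mod~$2$ boundary, or better by the integral boundary maps: if $A_0\subset\Sigma_0$, then the cells incident to $A_0$ are precisely the odd-dimensional cells appearing with nonzero coefficient in $\partial(\text{something in }A_0)$ together with those having an $A_0$-cell on their boundary. So the combinatorial quantity $\vert I(A_0)\vert$ should be bounded below by a rank coming from the boundary operators restricted to the span of $A_0$.

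The key step is to convert the homological vanishing into the numerical inequality. I would consider the even and odd chain groups bundled together, $C_{\rm ev}=\bigoplus_d C_{2d}(X,Y)$ and $C_{\rm odd}=\bigoplus_d C_{2d+1}(X,Y)$, and assemble from $\partial$ a single map $D\colon C_{\rm ev}\to C_{\rm odd}$ built out of the boundary operators between adjacent degrees. Acyclicity of the chain complex forces this combined map to be an isomorphism (this is where $H_*(X,Y)=0$ is used decisively: exactness in every degree makes the ``fold-up'' of the complex into a two-term object an isomorphism, after accounting for the splitting). Given a subset $A_0\subset\Sigma_0$, the submatrix of $D$ whose columns are indexed by $A_0$ has rank $\vert A_0\vert$, and its nonzero rows are indexed by cells in $I(A_0)$; hence $\vert I(A_0)\vert\ge\rank\ge\vert A_0\vert$, which is exactly Hall's condition~(3). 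Applying Theorem~\ref{wedding_pro} then yields the matching.

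The main obstacle I anticipate is the step claiming that $D$ is an isomorphism and that the rank of an arbitrary column-submatrix equals the number of its columns. Over a field this rank-versus-support argument is clean, but the cellular chain complex is over $\Z$, and incidence numbers can differ from $\pm 1$ in a general polyhedral complex; I would therefore work with coefficients in a field (say $\Q$, or $\Z/2\Z$ if one only wants the incidence pattern) to guarantee that column-independence is detected by the support. One must check that passing to a field does not lose acyclicity --- it does not, since $H_*(X,Y;\Z)=0$ implies $H_*(X,Y;\FF)=0$ by universal coefficients --- and that linear independence of the $A_0$-columns over the field indeed holds, which is where the combined folded map must be shown injective on the coordinate subspace spanned by $A_0$. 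Making precise how the several boundary maps $\partial_{2d}$ and $\partial_{2d+2}$ combine into one injective $D$ without cancellation between different degrees is the delicate bookkeeping I expect to be the crux.
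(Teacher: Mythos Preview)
Your overall strategy---Hall's criterion combined with linear algebra over a field, using that incidence is recorded by the support of the boundary operator---is exactly the paper's. The difference, and the gap, is in how you handle the grading.

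The map $D\colon C_{\rm ev}\to C_{\rm odd}$ you propose, if built from the boundary maps $\partial_{2d}$ alone, is \emph{not} injective: its kernel is $\bigoplus_d Z_{2d}=\bigoplus_d B_{2d}$, which is nonzero as soon as any $\partial_{2d+1}$ is nonzero. Acyclicity does not make this kernel vanish; it only makes cycles equal to boundaries. So the rank argument on column-submatrices breaks down, and you correctly flag this as the crux you have not resolved. (One genuine fix in your spirit: over a field with the cells orthonormal, take $D=(\partial+\partial^*)\vert_{C_{\rm ev}}$. This \emph{is} an isomorphism when $H_*=0$, by discrete Hodge theory, and $\partial^*\sigma$ is supported on the cofaces of $\sigma$, hence still inside $I(\{\sigma\})$. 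That completes your argument.)

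The paper sidesteps the folding problem entirely. It filters $(X,Y)$ by subcomplexes $X_n$ consisting of $Y$, the $(n{-}1)$-skeleton, and a set of $n$-cells spanning a complement to the cycles $Z_n\subset C_n$. Each successive pair $(X_n,X_{n-1})$ is again acyclic but has cells in only two consecutive dimensions, so its single boundary map $\partial_n$ is itself an isomorphism; then your rank-versus-support inequality $\vert A\vert=\dim\partial\langle A\rangle\le\vert I(A)\vert$ goes through directly for $A\subset\Sigma^n$, and Hall applies. The filtration is the missing idea.
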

Rational coefficients are understood everywhere; one could as well use $\Z/2\Z$,
or any field.
\begin{proof}
This is an application of Hall's criterion in the realm of elementary
algebraic topology.
For $n\ge 0$, consider as usual 
 the chain vector space $C_n(X,Y)$ of basis $\Sigma^n(X,Y)$; the differential
$$\partial_n:C_n(X,Y)\to C_{n-1}(X,Y)$$
and its kernel $Z_n(X,Y)$.

The following filtration of the pair $(X,Y)$ by
 subcomplexes $X_n\subset X$ is classical. Put $X_0:=Y$.
For $n\ge 1$, let $X_n$ be the union
of $Y$ with the $(n-1)$-skeleton of $X$ and with some $n$-cells which span a linear subspace
 complementary to $Z_{n}(X,Y)$
in $C_n(X,Y)$. Since $H_*(X,Y)=0$, it is straightforwardly verified that $H_*(X_n,Y)=0$
for every $n\ge 0$. Then, the long exact sequence for the relative homologies
of the triad $(X_n,X_{n-1},Y)$ yields
 $H_*(X_{n},X_{n-1})=0$ for every $n\ge 1$.

One is thus
 reduced to prove Lemma \ref{acyclic_lem}
 in the case where moreover, the cells lying in $X$ but not in $Y$ are of only two dimensions:
 $$\Sigma(X,Y)=\Sigma^n(X,Y)\sqcup\Sigma^{n-1}(X,Y)$$ for some
 $n\ge 1$. The pair being acyclic, necessarily
 $$\vert\Sigma^n(X,Y)\vert=\vert\Sigma^{n-1}(X,Y)\vert$$
 For every $A\subset\Sigma(X,Y)$, let  $$\langle A\rangle\subset C_*(X,Y)$$
  denote the spanned
 linear subspace. If moreover $A\subset\Sigma^n(X,Y)$,
recall the set $$I(A)\subset\Sigma^{n-1}(X,Y)$$ of the
 cells incident to at least one cell belonging to $A$;
hence $$\partial_n\langle A\rangle\subset\langle I(A)\rangle$$
Since $\partial_n$ is linear and one-to-one:
 $$\vert A\vert=\dim(\langle A\rangle)=\dim(\partial_n\langle A\rangle)\le\vert I(A)\vert$$
After the equivalence of (1) with (3) in the marriage theorem, 
 the pair $(X_n,X_{n-1})$ is matchable. \end{proof}

\begin{cor}[Subdivision]\label{subdivide_cor} Let $(X,Y)$ be a
pair of polyhedral complexes. Assume that $(X,Y)$ is matchable.

Then,
every polyhedral subdivision $(X',Y')$ of $(X,Y)$  is also matchable.
\end{cor}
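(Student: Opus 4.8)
The plan is to reduce to the Acyclic Pair Lemma \ref{acyclic_lem}, working one matched pair at a time. First I would fix a matching on $(X,Y)$ and write it as a family of incident pairs $\{\sigma_i,\tau_i\}$, where $\sigma_i$ is a hyperface of $\tau_i$, so that $\dim\tau_i=\dim\sigma_i+1$ and $\Sigma(X,Y)=\bigsqcup_i\{\sigma_i,\tau_i\}$. Each open cell $\rho$ of $X$ is subdivided by $X'$; let $N(\rho)$ be the set of cells of $X'$ whose relative interior is contained in the relative interior of $\rho$. Since every cell of $X'$ has its relative interior inside exactly one open cell of $X$, and the cells lying over $Y$ are precisely the cells of $Y'$, these sets give a partition
\[
\Sigma(X',Y')=\bigsqcup_i\bigl(N(\sigma_i)\cup N(\tau_i)\bigr).
\]
It therefore suffices to produce, for each $i$, a matching on $N(\sigma_i)\cup N(\tau_i)$ for the incidence relation of $X'$, and then to take their union.

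Next I would fix a single pair $(\sigma,\tau)$, set $k=\dim\tau$, and introduce two subcomplexes of $X'$: let $U$ be the subcomplex subdividing the closed cell $\overline\tau$, and let $V$ be the subcomplex subdividing $\partial\tau\setminus\mathring\sigma$, that is, the closure in $\partial\tau$ of the complement of the open hyperface $\sigma$. By construction a cell of $U$ fails to lie in $V$ exactly when its relative interior lies in $\mathring\tau$ or in $\mathring\sigma$; hence $\Sigma(U,V)=N(\tau)\cup N(\sigma)$. Moreover being a hyperface is a local condition, so the incidence relation on $\Sigma(U,V)$ inherited from $U$ coincides with the one inherited from $X'$; thus a matching on the pair $(U,V)$ is exactly a matching on $N(\sigma)\cup N(\tau)$ inside $X'$.

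The key point is that the pair $(U,V)$ is acyclic. Indeed $U\cong\overline\tau\cong D^k$ is a closed $k$-disk, while $V\cong\partial\tau\setminus\mathring\sigma$ is the complement, in the boundary sphere $\partial\tau\cong S^{k-1}$, of one open facet, hence a closed $(k-1)$-disk $D^{k-1}$ sitting inside $\partial U$ (for $k=1$ this reads as $S^0$ minus one of its two points, a single point $D^0$). As both $U$ and $V$ are contractible and the inclusion induces an isomorphism on homology, the long exact sequence of the pair gives $H_*(U,V)=0$. By the Acyclic Pair Lemma \ref{acyclic_lem}, $(U,V)$ is matchable, which supplies the desired matching on $N(\sigma)\cup N(\tau)$.

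Assembling these matchings over all $i$ yields a matching on $\Sigma(X',Y')$, proving the corollary. The main thing to verify with care is the localization step: that the cellwise pieces $N(\sigma_i)\cup N(\tau_i)$ genuinely partition $\Sigma(X',Y')$ and that each is the relative cell set of an acyclic pair of subcomplexes of $X'$, with the incidence relation correctly inherited. Once this bookkeeping is in place, the homological content is exactly the Acyclic Pair Lemma and the topology is the elementary identification of $(\overline\tau,\partial\tau\setminus\mathring\sigma)$ with $(D^k,D^{k-1})$.
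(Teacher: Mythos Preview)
Your proof is correct and follows essentially the same route as the paper: fix a matching on $(X,Y)$, and for each matched pair restrict $X'$ to the closed top cell relative to the union of its \emph{other} hyperfaces, then apply Lemma~\ref{acyclic_lem} to this acyclic pair and assemble the resulting local matchings. The only difference is notational (you call the larger cell $\tau$ and the hyperface $\sigma$, the paper does the reverse) and that you spell out the bookkeeping about the partition $\Sigma(X',Y')=\bigsqcup_i\Sigma(U_i,V_i)$ more explicitly than the paper, which simply asserts that the partial matchings ``clearly'' constitute a global one.
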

\begin{proof}
Consider a matching on $(X,Y)$.
For each matched pair $\sigma,\tau\in\Sigma(X,Y)$ with $\tau\subset\sigma$,
consider the union $$\hat\partial\sigma:=\partial\sigma\setminus Int(\tau)$$ of
the other hyperfaces of $\sigma$.
The restriction $$(X'\vert\sigma,X'\vert\hat\partial\sigma)$$
 is a pair of polyhedral complexes which does of course
  not always collapse, but which always
  admits a matching,
by Lemma \ref{acyclic_lem}. Clearly, the collection of all these partial
matchings constitutes a global matching for the pair of complexes $(X',Y')$.
\end{proof}

\begin{proof}[Proof of Theorem \ref{sphere_thm}] 
Let $X$ be a polyhedral cellulation of a rational
homology sphere $M$ of odd dimension $n$. One can assume that $n\ge 3$.
Fix a $(n-1)$-cell $\sigma$ of $X$ and a hyperface $\tau\subset\sigma$.
 Consider in $X$ the union $Y$ of $\tau$ with the cells of $X$ not containing $\tau$.
First,  the pair $(X,Y)$ is matchable (Remark \ref{cycle_trv}).
 Second, $H_*(Y,\partial\sigma)=0$, hence 
 the pair $(Y,\partial\sigma)$ is matchable (Lemma \ref{acyclic_lem}). Third,
 the polyhedral complex $\partial\sigma$, being
 homeomorphic to the $(n-2)$-sphere,
 is matchable by induction on $n$.
\end{proof}

\begin{cor}[Betti number $1$]\label{betti_cor}
 Let $M$ be  a closed connected $3$-manifold whose first Betti number is $1$.

 Then, every
polyhedral cellulation $X$ of $M$ is matchable.
\end{cor}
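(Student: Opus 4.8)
The plan is to mimic the strategy used in the rational homology sphere corollary: find a simple loop in the dual 1-skeleton that carries the "top-dimensional cycle" matching of Remark~\ref{cycle_trv}, then show the complementary piece has vanishing relative rational homology so that Lemma~\ref{acyclic_lem} applies. Since $M$ is a closed orientable (or at least rationally orientable, given $b_1=1$) 3-manifold, $\chi(M)=0$, so the Euler-characteristic obstruction vanishes and matchability is at least not ruled out. The key geometric input is that $b_1(M)=1$ means $H_1(M;\Q)\cong\Q$, so there is a nontrivial free homology class in degree $1$; dually, $H_2(M;\Q)\cong\Q$ as well.

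First I would choose a simple loop $\ell$ in the $1$-skeleton of the dual cellulation representing a generator of $H_1(M;\Q)/\text{torsion}$. By Remark~\ref{cycle_trv}, if $Y\subset X$ denotes the union of the cells of $X$ disjoint from $\ell$, then the pair $(X,Y)$ admits a matching. So it remains to match $Y$ absolutely. The cells of $Y$ are exactly those $3$-cells not dual to a vertex of $\ell$, together with all lower cells disjoint from $\ell$; geometrically $Y$ is the cellulation restricted to the complement of an open regular neighborhood of $\ell$, i.e.\ $Y$ cellulates $M\setminus\nb(\ell)$, a compact $3$-manifold with torus boundary (assuming $\ell$ is chosen framed/orientable along its dual tube).

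The heart of the matter is to match $Y$, and here I would try to arrange that $H_*(Y;\Q)=0$ so that Lemma~\ref{acyclic_lem} finishes the job directly. The relevant point is that cutting $M$ along a surface dual to the generating $2$-class, or removing a neighborhood of a curve generating $H_1$, should kill the rational homology: if $\ell$ generates $H_1(M;\Q)$ then in $M\setminus\nb(\ell)$ the generator has been removed, and by a Mayer--Vietoris / Alexander-duality computation on a $\Q$-homology $3$-manifold with $b_1=1$ one expects $Y$ to become a rational homology handlebody or rational homology ball, possibly after a preliminary collapse. Concretely I would compute $H_*(Y;\Q)$ via the long exact sequence of $(M, Y)$ together with excision $H_*(M,Y)\cong H_*(\nb(\ell),\partial)$, and check that the connecting maps are isomorphisms precisely because $[\ell]$ generates $H_1(M;\Q)$. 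If $H_*(Y;\Q)$ does not outright vanish but reduces to that of a lower-dimensional or collapsible piece, I would combine Lemma~\ref{acyclic_lem} with an explicit collapse as in Remark~\ref{collapse_trv}, decomposing $(X,\emptyset)$ into the matchable stages $(X,Y)$ and $(Y,\emptyset)$.

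The main obstacle I expect is the homological bookkeeping around the boundary torus of $Y$ and the role of torsion. Choosing $\ell$ to kill \emph{all} of $H_1(M;\Q)$ is clean, but the resulting $Y$ has nonzero $H_1$ coming from its boundary $\partial Y\cong T^2$ unless that class bounds inside $Y$; so $Y$ will generally be a rational homology solid torus rather than a rational homology ball, and $H_*(Y;\Q)\neq 0$. The fix is to not demand acyclicity of $Y$ but instead to match $Y$ relative to a suitably chosen subcomplex (a second dual loop, or the boundary torus) and iterate: match $(Y,Z)$ by Remark~\ref{cycle_trv} using a dual curve on $\partial Y$, then match the acyclic remainder $(Z,\emptyset)$ by Lemma~\ref{acyclic_lem}. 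Getting these two loops and the intervening collapses to be simultaneously simple, disjoint, and to cut the rational homology down to zero in the right order is the delicate step, and is precisely where the hypothesis $b_1=1$ (so that a single class, hence essentially one loop, suffices) is what makes the argument close.
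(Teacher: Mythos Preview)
Your overall architecture is exactly that of the paper: peel off a top-dimensional cycle via a simple loop $\ell^*$ in the dual $1$-skeleton (Remark~\ref{cycle_trv}), then match the remaining subcomplex $Y$, which you correctly identify as a rational homology solid torus. The gap is in the last step. Your proposed fixes --- a second \emph{dual} loop, or the boundary torus $\partial Y$ --- do not close the argument. A second dual loop $\ell'$ would, via Remark~\ref{cycle_trv}, produce a pair $(Y,Z)$ where $Z$ is $M$ with two open solid tori removed; this $Z$ is not rationally acyclic (its two torus boundary components already give $b_1(Z)\ge 1$), so Lemma~\ref{acyclic_lem} does not apply to $(Z,\emptyset)$. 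And the pair $(Y,\partial Y)$ is not acyclic either ($H_3(Y,\partial Y;\Q)\cong\Q$), nor is it of the form covered by Remark~\ref{cycle_trv}.

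The missing idea is to pair the dual loop with a \emph{primal} one. Choose, in the $1$-skeleton of $X$ itself, a simple loop $\ell$ homologous to $\ell^*$ (both representing the generator of $H_1(M;\Q)\cong\Q$). Then $\ell$ is a genuine subcomplex of $Y$, and since $\ell$ carries the generator of $H_1(Y;\Q)$, the long exact sequence of $(Y,\ell)$ gives $H_*(Y,\ell;\Q)=0$. Now Lemma~\ref{acyclic_lem} matches $(Y,\ell)$, and Remark~\ref{cycle_trv} matches the circle $\ell$. The three stages $(X,Y)$, $(Y,\ell)$, $(\ell,\emptyset)$ assemble to a matching of $X$. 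The moral: Remark~\ref{cycle_trv} consumes a class in $H_{n-1}$ via a \emph{dual} loop, while the residual class in $H_1$ must be killed by a \emph{primal} loop sitting inside $Y$ as a subcomplex; $b_1=1$ is exactly what lets a single such pair of loops do the whole job.
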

\begin{proof}
The 1-skeleton of $X$ (resp. of the dual cellulation) contains
a simple
loop $\ell$ (resp. $\ell^*$) generating $H_1(M)$.
 Consider the union $Y\subset X$ of the cells of $X$ disjoint from $\ell^*$.
  The pair $(X,Y)$ and the circle $\ell$
  are both matchable (Remark \ref{cycle_trv}).
 Also, $H_*(Y,\ell)=0$, hence  the pair $(Y,\ell)$
 is matchable (Lemma \ref{acyclic_lem}).
\end{proof}

Now, consider a triangulation $X$ of a compact manifold $M$ of dimension $n\ge 1$ with
smooth boundary $\partial M$ (maybe empty). If a nonsingular
vector field $\nabla$ on $M$ is transverse to every $(n-1)$-simplex of $X$, we say
for short that $\nabla$ is \emph{transverse to $X$.} 
Note that in particular, $\nabla$ is then transverse to $\partial M$; thus, $\partial M$
splits as the disjoint union of $\partial_s(M,\nabla)$,
where $\nabla$ enters $M$, with $\partial_u(M,\nabla)$, where $\nabla$ exits $M$.
\begin{thm}[Transverse nonsingular vector field]\label{flow_thm}
If the nonsingular
vector field $\nabla$ is transverse to the triangulation $X$, then $X$ is matchable
rel. $\partial_u(M,\nabla)$.
\end{thm}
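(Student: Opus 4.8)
The plan is to convert the continuous hypothesis into purely combinatorial data and then reduce, by a flow-adapted filtration, to the three elementary situations already at hand: collapses (Remark \ref{collapse_trv}), acyclic pairs (Lemma \ref{acyclic_lem}), and top-dimensional cycles (Remark \ref{cycle_trv}).

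First I would record the combinatorial shadow of $\nabla$. On each $(n-1)$-simplex $\tau$, transversality means that the component of $\nabla$ normal to $\tau$ never vanishes; since $\tau$ is connected, this component has constant sign, so $\tau$ acquires a co-orientation and the flow crosses it from a well-defined entry side to a well-defined exit side. Equivalently, $\nabla$ orients the $1$-skeleton of the dual cellulation, each dual edge $\tau^*$ pointing from the $n$-simplex the flow leaves to the one it enters. Each $n$-simplex $\sigma$ thus inherits a partition of its $(n-1)$-faces into entry faces and exit faces. Here the nonsingularity enters decisively: were every face of $\sigma$ an entry face, the field, pointing inward along the boundary sphere of the ball $\sigma$, would by Poincar\'e--Hopf have a zero in the interior, a contradiction; hence every $n$-simplex has at least one entry and at least one exit face, so the oriented dual graph has neither sources nor sinks apart from the prescribed exits into $\partial_u$ and entries from $\partial_s$.

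Next I would organize the $n$-simplices by the recurrence of this oriented dual graph, a combinatorial substitute for Conley's decomposition: passing to the condensation into strongly connected components yields a directed acyclic graph, and a topological ordering of its components should produce a filtration $\partial_u=X_0\subset X_1\subset\cdots\subset X_N=X$ of the pair $(X,\partial_u)$ by subcomplexes along which the flow runs upward. The two types of strata are to be matched by the two available mechanisms. A transition between distinct components is \emph{gradient-like}: there the relevant pair should be collapsible, or at least have vanishing relative homology, and Remark \ref{collapse_trv} or Lemma \ref{acyclic_lem} furnishes a matching. A nontrivial strongly connected component carries \emph{recurrence} and should be matched as in Remark \ref{cycle_trv}, the closed combinatorial orbits reproducing the recurrent orbits of $\nabla$ --- which is exactly why one must allow cyclic orbits and cannot hope for a collapse in general. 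The disjoint union of the partial matchings is then a matching on $(X,\partial_u)$, since $\Sigma(X,\partial_u)$ is the disjoint union of the $\Sigma(X_k,X_{k-1})$.

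The main obstacle is that $\nabla$ is controlled only on the codimension-one skeleton, whereas a filtration by subcomplexes must account for \emph{all} cells, including those of codimension $\ge 2$ to which $\nabla$ need not be transverse. Concretely, the naive attempt to match each $n$-simplex $\sigma$ with one of its exit faces and to recurse fails, because the pair formed by $\sigma$ and its entry region need not be acyclic when that region is disconnected, so the lower-dimensional faces cannot be bookkept simplex by simplex. I would therefore build the strata not from single simplices but from flow boxes: by the flow-box theorem $\nabla$ is locally a constant field, and inside such a box the matching can be produced by a relative collapse modeled on the one-dimensional case, matching a cell with its flow-translate across the box. The delicate point, and the place the argument must be made rigorous, is to glue these local product-type collapses into honest subcomplex strata of the \emph{given} triangulation $X$ and to verify that each stratum is matchable in every dimension, invoking Lemma \ref{acyclic_lem} on the wandering strata and Remark \ref{cycle_trv} on the recurrent ones; Hall's criterion (Theorem \ref{wedding_pro}) remains available to certify any stratum whose relative homology one can exhibit as vanishing.
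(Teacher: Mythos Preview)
Your plan has a genuine gap, and you have already located it yourself: the passage from the oriented dual $1$-skeleton to a filtration of $(X,\partial_u)$ by \emph{subcomplexes} is not carried out, and the tools you invoke do not cover the strata you produce. Concretely: Remark \ref{cycle_trv} matches only the cells meeting a \emph{simple} dual loop, whereas a nontrivial strongly connected component of the dual graph is in general far more complicated (many $n$-simplices, first Betti number $>1$); neither Remark \ref{cycle_trv} nor Lemma \ref{acyclic_lem} applies to such a stratum, and you give no argument that its relative homology vanishes. Your flow-box repair is not a proof either: flow boxes are not subcomplexes of the \emph{given} $X$, and the hypothesis does not allow you to subdivide. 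So as it stands the recurrent strata are unmatched and the wandering strata are unverified.

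The paper bypasses all of this global machinery with a purely local construction, and the idea you are missing is that transversality to the $(n-1)$-skeleton already controls \emph{every} simplex, not only the hyperfaces. For each $\sigma\in\Sigma(X,\partial_u)$ of any dimension there is a unique ``downstream'' $n$-simplex $d(\sigma)$ into which $\nabla$ points along $\Int(\sigma)$ (the $n$-simplices containing $\sigma$ are the chambers cut out by the $(n-1)$-simplices through $\sigma$, and transversality to each of those picks out exactly one chamber). The fibres $d^{-1}(\delta)$ then partition $\Sigma(X,\partial_u)$, and inside each $n$-simplex $\delta$ one chooses a vertex $v(\delta)$ lying in every \emph{unstable} hyperface --- here it is essential that $\delta$ is a simplex --- and pairs each stable face $\sigma$ with the face obtained by adjoining or deleting $v(\delta)$. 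A one-line check shows the mate stays in $d^{-1}(\delta)$. No filtration, no Conley decomposition, no Hall.
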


\begin{proof}
Because of the transversality, for every simplex $\sigma\in\Sigma(X)$ of dimension
less than $n$ and
 not contained in ${\partial_u(M,\nabla)}$ (resp. ${\partial_s(M,\nabla)}$),
  there is a unique {\it 
downstream} (resp. \emph{upstream}) $n$-simplex
$d(\sigma)$ (resp. $u(\sigma)$) $\in\Sigma^n(X)$ containing $\sigma$
and such that the vector field $\nabla$ enters $d(\sigma)$ (resp. exits $u(\sigma)$)
at every point of $Int(\sigma):=\sigma\setminus\partial\sigma$. For $\sigma\in\Sigma^n(X)$,
we agree that $d(\sigma):=\sigma$ and $u(\sigma):=\sigma$.

Consider any $n$-simplex $\delta\in\Sigma^n(X)$ and any face $\sigma\subset\delta$
(the case $\sigma=\delta$ is included.)
We call $\sigma$ \emph{stable} (resp. \emph{unstable}) with respect to $\delta$
 if $\sigma$ does not lie in ${\partial_u(M,\nabla)}$ (resp. ${\partial_s(M,\nabla)}$)
 and if $d(\sigma)=\delta$ (resp. $u(\sigma)=\delta$).
Note that
\begin{itemize}
\item 
 Every \emph{hyperface} of $\delta$ is either stable or unstable;
\item $\delta$
has at least one stable hyperface and at least one unstable hyperface
 (for degree reasons);
\item $\sigma$ is stable 
 if and only if every hyperface of $\delta$ containing $\sigma$
is stable. 
\end{itemize}

Next, for each $\delta\in\Sigma^n(X)$, pick arbitrarily a base vertex
$v(\delta)$ \emph{in the intersection $\partial_-\delta$
 of the unstable hyperfaces of $\delta$} (here of course, it is mandatory that
  $\delta$ is a simplex rather than a general convex polytope.) To this choice, there corresponds canonically a matching, as follows.
For every simplex $\sigma\in\Sigma(X,\partial_u(M,\nabla))$,
 define its {mate} $\bar\sigma$
by:

\begin{enumerate}
\item If $v(d(\sigma))\in\sigma$ then $\bar\sigma$ is the hyperface of
$\sigma$ opposed to $v(d(\sigma))$;
\item  If $v(d(\sigma))\notin\sigma$ then $\bar\sigma$ is the join of
$\sigma$ with $v(d(\sigma))$.
\end{enumerate}
These rules do define a matching on the pair $(X,\partial_u(M,\nabla))$:
 the point here is that $\bar\sigma$ is also a stable
face of $d(\sigma)$. Indeed, if not, then $\bar\sigma$ would be contained
in some unstable hyperface $\eta$ of $d(\sigma)$; but in both cases (1) and (2)
above, this would imply that $\sigma$ itself would be contained in $\eta$, a contradiction.
In other words, $d(\bar\sigma)=d(\sigma)$. It is now clear that
 the map $\sigma\mapsto\bar\sigma$ induces locally,
for each $n$-simplex $\delta$, an involution
in the set of the stable faces of $\delta$; and thus globally a matching on $\Sigma(X,
\partial_u(M,\nabla))$.

Note 1 --- It can be suggestive, for $n=2$ and $n=3$ and for each $0\le i\le n-1$,
 to figure out in $\R^n$, endowed
with the parallel vector field $\nabla:=-\partial/\partial x_n$, a linear $n$-simplex $\delta$
in general position with respect to $\nabla$ and such that
$\dim(\partial_-\delta)=i$; to list the stable faces and the unstable faces;
to choose a base vertex $v\in\partial_-\delta$;
and to compute the corresponding matching between the stable faces.

Note 2 --- We feel that the preceding natural construction is of special interest
with respect to Forman's
general question ``Which smooth vector fields can be triangulated?'' (\cite{forman_05}
section 3).

\end{proof}

In particular, the Hall cardinality conditions also constitute some
\emph{combinatorial} necessary conditions for a
triangulation to admit a transverse nonsingular vector field.
For example, in Example \ref{dim_4_exm}, not
only $X$ does not admit any transverse nonsingular vector field
(which is obvious since such a field would be transverse to $M_0$,
in contradiction with $\chi(M_1,M_0)\neq 0$), but this holds also
for every triangulation of $M$
\emph{combinatorially} isomorphic with $X$; and in particular,
for every jiggling of $X$.

\begin{proof}[Proof of Theorem \ref{main_thm}] 
Since $\chi(M,{\partial_0M})=0$, there is on $M$ a nonsingular vector field $\nabla$ transverse to
$\partial M$, which exits $M$ through ${\partial_0M}$, and
which enters $M$ through $\partial M\setminus{\partial_0M}$. Let $X$ be any triangulation of $M$. Then,
 by W. Thurston's famous Jiggling lemma
\cite{thurston_74},
 one has on $M$ a triangulation $X'$ which is combinatorially isomorphic to some
(iterated crystalline) subdivision of $X$, and which is {transverse} to $\nabla$.
By Theorem \ref{flow_thm}, $X'$ is matchable.
\end{proof}

\begin{proof}[Another proof of Theorem \ref{main_thm} in high dimensions]
Finally, we give an alternative construction for Theorem \ref{main_thm};
this construction works in
every dimension, but $3$.
 Note that, by Corollary \ref{subdivide_cor}
and the Hauptvermutung for smooth triangulations,
 it is enough to construct \emph{one}
triangulation of $M$ matchable relatively to $\partial_0M$.

After Asimov \cite{asimov_75}, since  $n\ge 4$ and $\chi(M,
{\partial_0M})=0$, the pair
$(M,{\partial_0M})$ admits a ``round handle decomposition''.
For each $0\le i\le n-1$, the \emph{round handle} 
 of dimension $n$ and index $i$ is
  defined as $$H^n_i:=\S^1\times\D^i\times\D^{n-i-1}$$
 and one puts $$
{\partial_0 H^n_i}:=\S^1\times
\S^{i-1}\times\D^{n-i-1}$$
 (one agrees that $\S^{-1}=\emptyset$).
By a  \emph{round handle decomposition} for $M$,
one means a filtration of $M$ by submanifolds dimension
$n$, with boundaries and corners:
$$\partial_0M\times[0,1]=M_0\subset M_1\subset\dots\subset M_\ell=M$$
such that, for each $1\le k\le\ell$, one obtains $M_k$ by attaching to $M_{k-1}$
a round handle of dimension $n$ and of some index  $0\le i\le n-1$;
 the attachment map is an embedding $$\partial_0 H^n_i\hookrightarrow
 \partial M_{k-1}\setminus(\partial_0 M\times 0)$$

Fix such a decomposition. Then, choose a triangulation
of $M$ for which each handle is a subcomplex. One is thus reduced to
the two cases
\begin{enumerate}
\item $M=\partial_0M\times[0,1]$; or
\item $M=H_i^n$ and $
{\partial_0M}=\partial_0H_i^n$,
for some $0\le i\le n-1$. 
\end{enumerate}

In case (1),
any triangulation of $M$ is matchable rel. $\partial_0M$
(Lemma $\ref{acyclic_lem}$).

 In case (2), one has a deformation retraction
of $\D^i\times\D^{n-i-1}$  onto its subset
$$K(n,i):=(\D^i\times 0)\cup(\S^{i-1}\times\D^{n-i-1})$$
Hence, $H^n_i$ retracts by deformation onto $\S^1\times K(n,i)$.
 We choose a triangulation of $M$ such
that $\S^1\times K(n,i)$ is a union of cells
of the triangulation. Applying Lemma $\ref{acyclic_lem}$ to the pair
$(M,\S^1\times K(n,i))$,
the proof is reduced to the case where $M=\S^1\times\D^i$ and $
{\partial_0M}=\S^1\times\S^{i-1}$. In that case,
let $X$ be any triangulation of $M$.
The $1$-skeleton of the dual subdivision contains a simple loop $\ell$ homologous to the core $\S^1\times 0$.
Consider the union $Y\subset X$ of the cells of $X$ disjoint from $\ell$.
On the one hand, the pair $(X,Y)$ is matchable (Remark \ref{cycle_trv}).
On the other hand, since $H_*(Y,
{\partial_0M})=0$, the pair $(Y,{\partial_0M})$
 is matchable (Lemma \ref{acyclic_lem}). 
 \end{proof}

\bigskip

Aix-Marseille Universit\'{e}, Centrale Marseille,

I2M (UMR 7373 CNRS),

Technopôle de Château-Gombert,

39, rue Fr\'ed\'eric Joliot-Curie,

13453 MARSEILLE Cedex 13,
France

{gael.meigniez@univ-amu.fr}

\end{document}